\theoremstyle{definition}
\newtheorem{defi}{Definition}
\theoremstyle{plain}
\newtheorem{thm}[defi]{Theorem}
\newtheorem{cor}[defi]{Corollary}
\newtheorem{prop}[defi]{Proposition}
\numberwithin{equation}{section}
\newcommand{\R}{\ensuremath{\mathbbm{R}}}     
\newcommand{\C}{\ensuremath{\mathbbm{C}}}     
\newcommand{\Z}{\ensuremath{\mathbbm{Z}}}     
\def\lt{\left}
\def\rt{\right}
\newcommand{\Rd}{\ensuremath{\R^d}}                       
\renewcommand{\Re}[1]{\ensuremath{\mathrm{Re}\lt(#1\rt)}} 
\renewcommand{\Im}[1]{\ensuremath{\mathrm{Im}\lt(#1\rt)}} 
\newcommand{\Reh}[1]{\ensuremath{\mathcal{R}e(#1)}}       
\newcommand{\Imh}[1]{\ensuremath{\mathcal{I}m(#1)}}       
\newcommand{\M}{\ensuremath{\mathcal{M}}}                 
\newcommand{\set}[1]{\left\{#1\right\}}                   
\newcommand{\gen}[1]{\lt\langle #1\rt\rangle}             
\def\imod#1{\allowbreak\mkern10mu({\operator@font mod}\,\,#1)} 
\def\@setcopyright{}                                           
\def\serieslogo@{}
\newcommand{\G}{\ensuremath{\Gamma}}        
\begin{document}
	\author[M.J.C.~Loquias]{Manuel Joseph C.~Loquias}
	\author[L.D.~Valdez]{Lilibeth D.~Valdez}
	\author[M.L.B.~Walo]{Ma.~Lailani B.~Walo}
	\address{Institute of Mathematics and Natural Sciences Research Institute, University of the Philippines Diliman, 1101 Quezon City, Philippines}
	\email{\{mjcloquias,ldicuangco,walo\}@math.upd.edu.ph}

	\title{Color groups of colorings of $N$-planar modules}

	\begin{abstract}
	 	A submodule of a $\Z$-module determines a coloring of the module where each coset of the submodule is associated to a unique color.  
		Given a submodule coloring of a $\Z$-module, the group formed by the symmetries of the module that induces a permutation of colors 
		is referred to as the color group of the coloring.  
		In this contribution, a method to solve for the color groups of colorings of $N$-planar modules where $N=4$ and $N=6$ are given.  
		Examples of colorings of rectangular lattices and of the vertices of the Ammann-Beenker tiling are given to exhibit how these methods may be extended to the general case.
	\end{abstract}

	\subjclass[2010]{Primary 52C05; Secondary 11H06, 82D25, 52C23}

	\keywords{color group, sublattice coloring, nonperfect coloring}

	\date{\today}

	\maketitle
	
	\section{Introduction}
		Colorings and color symmetries of crystals and quasicrystals can be considered a classical topic in crystallography (see \cite{S84,L97} and references therein).
		Most of the literature on colorings of lattices or $\Z$-modules put emphasis on Bravais colorings (\textit{cf.} \cite{B97b,BGS02,BG04,BDLPF13}).
		Nevertheless, studies have been made on general colorings of lattices or $\Z$-modules such as in \cite{H78,MP94,DLPF07}.
		In this contribution, we give some results on how to identify the color groups of sublattice colorings of lattices or $\Z$-modules,
		and give examples to illustrate them.  In particular, we give nonperfect non-Bravais colorings of the rectangular lattice and the vertices of the Ammann-Beenker tiling.
		
		By a \emph{$\Z$-module $\M$ of rank $r$ and dimension $d$}, we mean a subset of $\Rd$ that is spanned by $r$ vectors in $\Rd$ that 
		are linearly independent over $\Z$ but span $\Rd$.  
		As a group, $\M$ is isomorphic to 
		the free abelian group of rank $r$.  
		A $\Z$-module $\G$ of rank $r=d$ is called a \emph{lattice}. At the outset, we shall talk primarily of 
		lattices in dimension $d$, but results do hold for $\Z$-modules.  
		
		Let $\G_1$ be a lattice in $\Rd$.  A \emph{coloring} of $\G_1$ by $m$ colors is an onto mapping $c:\G_1\rightarrow C$, where $C$ is the 
		set of $m$ colors used in the coloring.  Of particular interest are colorings of $\G_1$ by $m$ colors wherein two points of $\G_1$ are assigned the 
		same color if and only if they belong to the same coset of a sublattice $\G_2$ of index $m$ in $\G_1$.  Such a coloring shall be referred to as a 
		\emph{coloring of $\G_1$ determined by its sublattice $\G_2$}. Here, the set of colors $C$ can be identified with the quotient group
		$\G_1/\G_2$ so that the color mapping $c$ is simply the canonical projection of $\G_1$ onto $\G_1/\G_2$ with kernel $\G_2$.  
		Hence, we may take $C=\{c_0=\mathbf{0},c_1,\ldots,c_{m-1}\}$ to be a complete	set of coset representatives of $\G_2$ in $\G_1$, and say that the coset 
		$c_j+\G_2$ has color $c_j$.
		
		Denote by $G$ the symmetry group of $\G_1$ and fix a sublattice coloring $c$ of $\G_1$. 
		The \emph{point group} of $G$, $P(G)$, consists of all symmetries in $G$ that fix the origin.		
		A symmetry in $G$ is called a \emph{color symmetry} of the
		coloring if all and only those points having the same color are mapped by the symmetry to a fixed color.  The set $H$ of all color symmetries of the coloring,
		that is, 
		$H=\{h\in G:\exists\,\sigma_h\in S_C\text{ such that }\forall\,\ell\in\G_1, c(h(\ell))=\sigma_h(c(\ell))\},$
		where $S_C$ is the group of permutations on the set of colors $C$, forms a group called the \emph{color group} or \emph{color symmetry group} of the coloring.
		The mapping $P:H\rightarrow S_C$ with $h\mapsto\sigma_h$ defines a group homomorphism, and thus the group $H$ acts on $C$.
		A coloring where $G=H$ is referred to as a \emph{perfect coloring}.
		
		The color groups of sublattice colorings of the square and hexagonal lattices were computed in~\cite{DLPF07}.  They were obtained by representing 
		sublattices of the square and hexagonal lattices as upper triangular matrices and expressing elements of the point groups of the lattices as matrices relative
		to the standard basis of~$\R^2$.		
		On the other hand, colorings of the square and hexagonal lattices determined by their square and hexagonal sublattices, respectively, (also referred to as 
		\emph{Bravais colorings}) were considered in~\cite{BDLPF13}.  In particular, the color groups were computed using number-theoretic properties of
		the ring of Gaussian and Eisenstein integers.
	
	\section{Some general results}
		Consider a coloring $c$ of the lattice $\G_1$ in $\Rd$ determined by a sublattice $\G_2$ of index $m$ in~$\G_1$.  The next theorem, which 
		belongs to the folklore and was proven for specific cases (\textit{cf.}~\cite[Theorems~1 and 2]{DLPF07} and~\cite[Theorem 2]{BDLPF13}), 
		gives us a method to compute for the color group $H$ of $c$.
		
		\begin{thm}\label{genresult}
			Let $G$ be the symmetry group of $\G_1$, where $P(G)$ and $T(G)$ denote the point group and translation subgroup of $G$, respectively.  
			Then $T(G)$ is a subgroup of the color group $H$.  
			In addition, an element $g\in P(G)$ is a color symmetry of $c$ if and only if $g$ leaves $\G_2$ invariant, that is, $g\G_2=\G_2$.			
		\end{thm}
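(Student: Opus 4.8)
The plan is to handle the two assertions separately, in each case using that the color map $c$ is nothing but the canonical projection $\G_1\to\G_1/\G_2$, so that two points of $\G_1$ share a color precisely when they lie in the same coset of $\G_2$.

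For the translation part, I would first recall the standard fact that the translation subgroup $T(G)$ of the symmetry group of $\G_1$ consists exactly of the maps $\tau_t\colon\ell\mapsto\ell+t$ with $t\in\G_1$: a pure translation $\tau_t$ preserves $\G_1$ iff $t=\tau_t(\mathbf{0})\in\G_1$, and conversely $t\in\G_1$ gives $\tau_t(\G_1)=t+\G_1=\G_1$. For such a $\tau_t$ I would then write down the required color permutation explicitly, namely the left translation $\sigma_t$ of the finite group $\G_1/\G_2$ by the coset $t+\G_2$. One checks directly that $c(\tau_t(\ell))=(\ell+t)+\G_2=\sigma_t(\ell+\G_2)=\sigma_t(c(\ell))$ for every $\ell\in\G_1$, so $\tau_t\in H$ and hence $T(G)\le H$.

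For the point-group part I would argue the two implications. If $g\in P(G)$ satisfies $g\G_2=\G_2$, then since $g$ is linear it induces a well-defined map $\sigma_g$ on $\G_1/\G_2$ by $\ell+\G_2\mapsto g\ell+\G_2$ (well-definedness uses $g\G_2\subseteq\G_2$; it is a bijection because $g^{-1}\G_2=\G_2$ as well), and then $c(g\ell)=g\ell+\G_2=\sigma_g(c(\ell))$, so $g$ is a color symmetry. Conversely, suppose $g\in P(G)$ is a color symmetry with associated permutation $\sigma_g$. Because $g$ fixes the origin, $c(\mathbf{0})=c(g\mathbf{0})=\sigma_g(c(\mathbf{0}))$, so $\sigma_g$ fixes $c_0=\mathbf{0}+\G_2$, the color of $\G_2$. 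For any $\ell\in\G_2$ we then have $c(\ell)=c(\mathbf{0})=c_0$, whence $c(g\ell)=\sigma_g(c(\ell))=\sigma_g(c_0)=c_0$; since the points colored $c_0$ are exactly those of $\G_2$, this forces $g\ell\in\G_2$, i.e.\ $g\G_2\subseteq\G_2$. Applying the same reasoning to $g^{-1}$ — which lies in $P(G)$ and is again a color symmetry (with permutation $\sigma_g^{-1}$) — gives $g^{-1}\G_2\subseteq\G_2$, equivalently $\G_2\subseteq g\G_2$; the two inclusions together yield $g\G_2=\G_2$.

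I do not expect a genuine obstacle. The one place demanding care is the promotion of $g\G_2\subseteq\G_2$ to $g\G_2=\G_2$, for which I use $g^{-1}$; one could alternatively invoke finiteness of $P(G)$ or the fact that an isometry preserves the index $[\G_1:\G_2]$. I would also note that the whole argument uses only that $c$ separates the cosets of $\G_2$ and that $\G_1/\G_2$ is a group under coset addition, so the same proof goes through verbatim with $\G_1,\G_2$ replaced by a $\Z$-module and a finite-index submodule.
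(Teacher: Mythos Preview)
Your proof is correct and follows essentially the same line as the paper's: both handle translations by exhibiting the induced coset translation as the color permutation, and for the point-group part both use that $g(\mathbf{0})=\mathbf{0}$ to pin down the image of $\G_2$. The only minor variance is that the paper observes directly that $g\G_2$ is a full coset (hence $g\G_2=\G_2$ once $\mathbf{0}\in g\G_2$), whereas you first establish $g\G_2\subseteq\G_2$ and then upgrade to equality via $g^{-1}$.
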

		\begin{proof}
			Let $g\in T(G)$ be the translation by $\ell\in\G_1$. For any $c_i+\G_2\in \G_1/\G_2$, $g(c_i+\G_2)=(\ell+c_i)+\G_2=c_j+\G_2$ for some $0\leq j
			\leq m-1$.  Hence, $g$ sends color $c_i$ to color $c_j$, and $g\in H$.
				
			Suppose $g\in P(G)$  is a color symmetry of $c$.  Then $g$ sends color $c_0$ to some color $c_i$, $0\leq i\leq m-1$, that is,  
			$g\G_2=c_i+\G_2$.  Since $\mathbf{0}\in\G_2$ and $g(\mathbf{0})=\mathbf{0}$, then $g\G_2=\G_2$.
			In the other direction, suppose $g\G_2=\G_2$.  Then for every coset $c_i+\G_2$, $g(c_i+\G_2)=gc_i+g\G_2=c_j+\G_2$ for some $0\leq j\leq m-1$,
			since $g\in P(G)$.  Thus, $g\in H$.
		\end{proof}				
		
		It follows from above that every color symmetry of $c$ fixes color $c_0$. 
		Note however that Theorem~\ref{genresult} does not imply that the color group of a sublattice coloring and the symmetry group of the sublattice
		are the same.		
		
		The following corollary relates color groups of colorings determined by similar sublattices.
		\begin{cor}\label{primitive}
			Let $\G_3=\alpha R\G_2$ where $\alpha\in\R^+$ and $R\in O(d,\R)$.  
			Then $P(H')=R[P(H)]R^{-1}$ where $H'$ is the color group of the coloring of $\G_1$ determined by $\G_3$.		
		\end{cor}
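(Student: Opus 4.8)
The plan is to use Theorem~\ref{genresult} to replace membership in the color groups by invariance of the sublattices, and then to observe that conjugation by $R$ turns invariance of $\G_2$ into invariance of $\G_3$. Write $G$ for the symmetry group of $\G_1$. Since $H$ and $H'$ are subgroups of $G$, an origin-fixing element of $H$ is precisely an element of $P(G)$ that is a color symmetry, so $P(H)=H\cap P(G)$ and, likewise, $P(H')=H'\cap P(G)$; combining this with Theorem~\ref{genresult} gives
\[
	P(H)=\set{g\in P(G):g\,\G_2=\G_2},\qquad P(H')=\set{g\in P(G):g\,\G_3=\G_3}.
\]
It therefore suffices to show that $g\mapsto RgR^{-1}$ maps the first of these sets bijectively onto the second, for then $P(H')=R[P(H)]R^{-1}$.

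The decisive computation is short: for any origin-fixing orthogonal transformation $g$,
\[
	(RgR^{-1})\,\G_3=(RgR^{-1})(\alpha R\,\G_2)=\alpha R\,(g\,\G_2),
\]
where we used only that the scalar $\alpha$ commutes with every linear map and that $R$ is invertible. Hence $(RgR^{-1})\G_3=\G_3$ if and only if $\alpha R\,(g\,\G_2)=\alpha R\,\G_2$, that is, if and only if $g\,\G_2=\G_2$; writing $\G_2=\alpha^{-1}R^{-1}\G_3$ gives the symmetric statement that $(R^{-1}g'R)\G_2=\G_2$ if and only if $g'\,\G_3=\G_3$. Thus $g\mapsto RgR^{-1}$ and $g'\mapsto R^{-1}g'R$ are mutually inverse group homomorphisms interchanging $\set{g:g\,\G_2=\G_2}$ and $\set{g':g'\,\G_3=\G_3}$ inside the orthogonal group.

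The step I expect to be the main obstacle is verifying that these maps also respect the side condition of lying in $P(G)$: a priori $RgR^{-1}$ is merely orthogonal, not a symmetry of $\G_1$, so one needs to know that $R$ normalizes $P(G)$. This is the point where the relationship between $R$ and $\G_1$ must be used. In the cases of interest it is available: if $R\in P(G)$ then trivially $R\,P(G)\,R^{-1}=P(G)$ (and one should also note $\alpha R\,\G_2\subseteq\G_1$, so that $\G_3$ is a submodule of $\G_1$), while more generally, whenever $\alpha R$ maps $\G_1$ onto itself, applying $\alpha R$ shows that $g\,\G_1=\G_1$ is equivalent to $(RgR^{-1})\G_1=\G_1$, again giving $R\,P(G)\,R^{-1}=P(G)$. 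With $R\,P(G)\,R^{-1}=P(G)$ in hand, the bijections of the previous paragraph restrict to mutually inverse maps between $P(H)$ and $P(H')$; this yields the inclusions $R[P(H)]R^{-1}\subseteq P(H')$ and $R^{-1}[P(H')]R\subseteq P(H)$, hence $P(H')=R[P(H)]R^{-1}$, as claimed.
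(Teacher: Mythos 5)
Your core computation is identical to the paper's: the published proof consists entirely of the chain $g\G_3=\G_3\Leftrightarrow gR\G_2=R\G_2\Leftrightarrow R^{-1}gR\,\G_2=\G_2$ for $g\in P(G)$, followed by the words ``the claim now follows from Theorem~\ref{genresult}.'' The difference is that the obstacle you single out --- that $R^{-1}gR$ is a priori only orthogonal and need not lie in $P(G)$, so Theorem~\ref{genresult} cannot be applied to it unless $R$ normalizes $P(G)$ --- is passed over in silence in the paper, and it is a genuine gap: the corollary as stated, with arbitrary $R\in O(d,\R)$, is false. Take $\G_1=\G_2=\Z[i]$ and let $\alpha R$ be multiplication by $1+2i$, so $\alpha=\sqrt5$ and $R$ is the rotation by $\arctan 2$; then $\G_3=(1+2i)\Z[i]$ is a sublattice of index $5$, and $P(H)=P(G)\cong D_4$ has order $8$, but $T_r$ maps $\G_3$ to $(1-2i)\Z[i]\neq\G_3$ (since $1+2i$ and $1-2i$ are not associates), so $P(H')=\gen{R_i}\cong C_4$ has order $4$ and cannot equal $R[P(H)]R^{-1}$. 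Your added hypothesis $R\,P(G)\,R^{-1}=P(G)$ (in particular $R\in P(G)$, or $R=\mathrm{id}$) is exactly what repairs the statement, and it does hold in the one application the paper makes of the corollary --- reducing $\gen{a,b+ci}_{\Z}$ to a primitive triple, where $R$ is the identity. In short: same approach as the paper, but your more careful version is the correct one, and the ``main obstacle'' you flag should be promoted to an explicit hypothesis of the corollary.
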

		\begin{proof}
			Given $g\in P(G)$, $g\G_3=\G_3\Leftrightarrow gR\G_2=R\G_2\Leftrightarrow R^{-1}gR\G_2=\G_2$. 
			The claim now follows from Theorem~\ref{genresult}.
		\end{proof}
				
	\section{Color groups of sublattice colorings of planar lattices}
		Here, we view planar lattices as discrete subsets of $\C$.  
		In this setting, rotations about the origin by $\theta$ in the counterclockwise direction correspond to multiplication by the complex number $e^{i\theta}$.  
		We  denote such rotations by $R_{e^{i\theta}}$. 
		Also, the reflection $T_r$ along the real axis is associated to complex conjugation.  
	
		Suppose $\G_2$ is a sublattice of the planar lattice $\G_1$.  Then $R_{(-1)}\G_2=-\G_2=\G_2$.  
		By applying Theorem~\ref{genresult}, we obtain the following result.		
		\begin{prop}\label{180}
			The $180^{\circ}$-rotation about the origin is a color symmetry of a sublattice coloring of any planar lattice.
		\end{prop}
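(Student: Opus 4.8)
The plan is to derive the statement directly from Theorem~\ref{genresult} together with the elementary fact that negation is a symmetry of every lattice. First I would observe that the $180^{\circ}$-rotation about the origin is, in the complex picture, exactly the map $R_{(-1)}\colon z\mapsto -z$, and that $-\G_1=\G_1$ because a lattice is an abelian group and is therefore closed under taking inverses. Hence $R_{(-1)}$ belongs to the symmetry group $G$ of $\G_1$, and since it fixes the origin it lies in the point group $P(G)$. (This already shows, incidentally, that $C_2\subseteq P(G)$ for every planar lattice, and more generally for every lattice in $\R^d$.)

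Next I would invoke the observation recorded immediately before the statement: for any sublattice $\G_2$ of $\G_1$ one likewise has $R_{(-1)}\G_2=-\G_2=\G_2$, again by closure under inverses. Thus $R_{(-1)}$ is an element of $P(G)$ that leaves $\G_2$ invariant.

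Finally, applying Theorem~\ref{genresult} with $g=R_{(-1)}$ to the coloring $c$ determined by $\G_2$ shows that $R_{(-1)}$ is a color symmetry of $c$, which is precisely the assertion. I expect no genuine obstacle here: the only point worth a word of justification is that $R_{(-1)}\in P(G)$ for every planar lattice, which is immediate from the group structure, so the entire content of the proposition is carried by Theorem~\ref{genresult}. The statement is best read as a convenient record of the fact that sublattice colorings of planar lattices always possess a two-fold color symmetry.
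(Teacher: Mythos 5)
Your proposal is correct and follows exactly the argument the paper gives in the sentence preceding the proposition: $R_{(-1)}\G_2=-\G_2=\G_2$ by closure of the sublattice under negation, combined with Theorem~\ref{genresult}. Your additional remark that $R_{(-1)}\in P(G)$ for every lattice is a worthwhile point the paper leaves implicit, but it does not change the route.
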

				
		\subsection{Sublattice Colorings of the Square and Hexagonal Lattices}
			We identify the square lattice $\G_1$ with the ring of Gaussian integers $\Z[i]$.  The symmetry group $G$ of $\G_1$ is of type $p4m$ (or $\ast442$ in 
			orbifold notation) and it is symmorphic, that is $G=P(G)\rtimes T(G)$ where $P(G)=\set{R_{\pm 1},R_{\pm i}} \rtimes \gen{T_r}$ and $T(G)$ is the 
			group of translations of $G$.  
			Given linearly independent vectors $u,v\in\G_1$, let $\G_2=\gen{u,v}_{\Z}$, that is, the sublattice of $\G_1$ generated by $u$ and $v$.   
			Applying Theorem~\ref{genresult} and Proposition~\ref{180} in this setting,
			it suffices to check whether $R_i$, $T_r$, and $R_iT_r$ fix the sublattice $\G_2$ in order to identify the color group of the coloring of $\Z[i]$ 
			determined by $\G_2$.  One obtains the following results.
			\begin{enumerate}[1.]
				\item $\G_2$ is invariant under $R_i$ if and only if $\Im{u\overline{v}}$ divides $N(u)$, $N(v)$, and $\Re{u\overline{v}}$. 
				
				\item $\G_2$ is invariant under $T_r$ if and only if $\Im{u\overline{v}}$ divides $\Im{u^2}$, $\Im{v^2}$, and $\Im{uv}$.
				
				\item $\G_2$ is invariant under $R_iT_r$ if and only if $\Im{u\overline{v}}$ divides $\Re{u^2}$, $\Re{v^2}$, and $\Re{uv}$.
			\end{enumerate}
			
			These observations yield the following theorem.
			
			\begin{thm}\label{sqr}
				Let $\G_1=\Z[i]$ and $\G_2=\gen{u,v}_{\Z}\subseteq\G_1$.  Then the color group of the coloring of $\G_1$ determined by $\G_2$ is of type
				\begin{enumerate}[\emph{(}a\emph{)}]
					\item $p4m$ if $\Im{u\overline{v}}$ divides $N(u)$, $N(v)$, $\Re{u\overline{v}}$, $u^2$, $v^2$, and $uv$.
					
					\item $p4$ if $\Im{u\overline{v}}$ divides $N(u)$, $N(v)$, and $\Re{u\overline{v}}$, and does not divide any of the following: $\Im{u^2}$, $\Im{v^2}$,
					and $\Im{uv}$.
					
					\item $pmm$ if $\Im{u\overline{v}}$ divides $\Im{u^2}$, $\Im{v^2}$, and $\Im{uv}$, and does not divide any of the following: $N(u)$, $N(v)$, and 
					$\Re{u\overline{v}}$.
					
					\item $cmm$ if $\Im{u\overline{v}}$ divides $\Re{u^2}$, $\Re{v^2}$, and $\Re{uv}$, and does not divide any of the following: $N(u)$, $N(v)$, and 
					$\Re{u\overline{v}}$.
					
					\item $p2$ if none of the above criteria is satisfied.
				\end{enumerate}
			\end{thm}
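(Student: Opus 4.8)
The plan is to combine Theorem~\ref{genresult}, Proposition~\ref{180}, and the three invariance criteria stated immediately before the theorem. By Theorem~\ref{genresult}, $T(G)\le H$ always, and the remaining question is which elements of $P(G)=\set{R_{\pm 1},R_{\pm i}}\rtimes\gen{T_r}$ leave $\G_2$ invariant. Since $P(G)$ has order $8$ and is generated by $R_i$ and $T_r$, with $R_{-1}=R_i^2$ always fixing $\G_2$ by Proposition~\ref{180}, the subgroup $P(H)=\set{g\in P(G):g\G_2=\G_2}$ is a subgroup of $D_4$ containing the central element $R_{-1}$. The subgroups of $D_4$ containing its center are: all of $D_4$; the cyclic group $\gen{R_i}=\set{R_{\pm 1},R_{\pm i}}$; the two Klein four-groups $\set{R_{\pm 1},T_r,R_{-1}T_r}$ and $\set{R_{\pm 1},R_iT_r,R_{-i}T_r}$; and $\gen{R_{-1}}=\set{R_{\pm 1}}$ itself. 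These five possibilities correspond precisely to the five symmetry types $p4m$, $p4$, $pmm$, $cmm$, $p2$ listed in the theorem, since $H=P(H)\rtimes T(G)$ (the symmetry group of $\G_1$ is symmorphic, so $T(G)$ is normal in $G$ and hence in $H$, and $H\cap P(G)=P(H)$ splits the extension).

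First I would record that $R_i\in P(H)$ iff criterion~1 holds, $T_r\in P(H)$ iff criterion~2 holds, and $R_iT_r\in P(H)$ iff criterion~3 holds. Then I would walk through the five cases. For (a): if $\Im{u\overline v}$ divides $N(u),N(v),\Re{u\overline v}$ and also $u^2,v^2,uv$ (meaning it divides both real and imaginary parts of each), then criteria 1 and 2 both hold, so $R_i,T_r\in P(H)$, hence $P(H)=D_4$ and $H$ is of type $p4m$. (One should note that divisibility by $u^2,v^2,uv$ already implies criterion~3, so $R_iT_r\in P(H)$ automatically, consistent with $D_4$.) For (b): criterion~1 holds but criterion~2 fails, so $R_i\in P(H)$ while $T_r\notin P(H)$; since $\gen{R_i}$ has index $2$ in $D_4$ and every larger subgroup contains $T_r$ or another reflection, but a subgroup of $D_4$ of order $>4$ is all of $D_4$ which contains $T_r$, we get $P(H)=\gen{R_i}$, type $p4$. (Here I would remark that if $R_i\in P(H)$ then $T_r\in P(H)\Leftrightarrow R_iT_r\in P(H)$, so ruling out $T_r$ via "$\Im{u\overline v}$ does not divide any of $\Im{u^2},\Im{v^2},\Im{uv}$" suffices.) For (c): criterion~2 holds, criterion~1 fails, so $T_r\in P(H)$, $R_i\notin P(H)$; also $R_iT_r\notin P(H)$ since otherwise $R_i=(R_iT_r)T_r^{-1}\in P(H)$; hence $P(H)=\set{R_{\pm1},T_r,R_{-1}T_r}$, type $pmm$. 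For (d): symmetric to (c) with $R_iT_r$ in place of $T_r$, giving $P(H)=\set{R_{\pm1},R_iT_r,R_{-i}T_r}$, type $cmm$. For (e): none of criteria 1, 2, 3 holds, so no element of $P(G)\setminus\set{R_{\pm1}}$ fixes $\G_2$, hence $P(H)=\set{R_{\pm1}}$, type $p2$.

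The one genuinely non-routine point—and the step I would be most careful about—is verifying that the five hypotheses in the theorem are mutually exclusive and jointly exhaustive, i.e. that they really do partition all sublattices $\G_2$, so that "$p2$ if none of the above" is unambiguous. This reduces to the lattice-theoretic observation that, once we know whether each of $R_i$, $T_r$, $R_iT_r$ individually fixes $\G_2$, the subgroup $P(H)$ is determined, together with the implications among the three criteria noted above (divisibility of both parts of $u^2,v^2,uv$ forces criterion~3; if $R_i\in P(H)$ then $T_r$ and $R_iT_r$ are equivalent; at most one of $T_r,R_iT_r$ can lie in $P(H)$ when $R_i\notin P(H)$). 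I would dispatch this by an explicit check of the subgroup lattice of $D_4$ above the center, matching each of the five subgroups to the corrsesponding International (Hermann–Mauguin) symbol for a planar space group with that point group and with full translation lattice $T(G)$. Finally I would invoke the symmorphic splitting $H=P(H)\rtimes T(G)$ to conclude the space-group type of $H$ from the type of $P(H)$.
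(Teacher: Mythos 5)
Your proposal is correct and is essentially the paper's own argument: the paper offers no explicit proof of Theorem~\ref{sqr}, asserting only that it follows from the three invariance criteria for $R_i$, $T_r$, $R_iT_r$ together with Theorem~\ref{genresult} and Proposition~\ref{180}, and your enumeration of the subgroups of $P(G)\cong D_4$ containing $R_{-1}$, matched to the five plane-group types via the symmorphic splitting, is exactly the intended derivation. The only caveat is that exhaustiveness of the five cases requires reading ``does not divide any of'' as the negation of the corresponding criterion (i.e.\ fails to divide at least one of the three quantities); this is an issue with the theorem's phrasing rather than with your argument.
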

			
			If we write $\G_2=\gen{a,b+ci}_{\Z}$, where $a,b,c\in\Z$, then one readily obtains the results in~\cite{DLPF07} by applying Theorem~\ref{sqr}.  
			Because of Corollary~\ref{primitive}, we may even assume that $a$, $b$, and $c$ are coprime.  		
			
			Analogous results also hold for colorings of the hexagonal lattice.  
			We associate the hexagonal lattice to the ring of Eisenstein integers $\Z[\xi]$, where $\xi=e^{2\pi i/3}$.
			For the following result, given $u=a+b\xi\in\Z[\xi]$, we use the notation $\Reh{u}:=a$ and $\Imh{u}:=b$.
			\begin{thm}\label{hex}
				Let $\G_1=\Z[\xi]$ and $\G_2=\gen{u,v}_{\Z}\subseteq\G_1$.				
				Then the color group of the coloring of $\G_1$ determined by $\G_2$ is of type
				\begin{enumerate}[\emph{(}a\emph{)}]
					\item $p6m$ if $\Imh{u\overline{v}}$ divides $N(u)$, $N(v)$, $\Reh{u\overline{v}}$, $u^2$, $v^2$, and $uv$.
					
					\item $p6$ if $\Imh{u\overline{v}}$ divides $N(u)$, $N(v)$, and $\Reh{u\overline{v}}$, and does not divide any of the following: $\Imh{u^2}$, $\Imh{v^2}$,
					and $\Imh{uv}$.
					
					\item $cmm$ if $\Imh{u\overline{v}}$ divides $\Imh{u^2}$, $\Imh{v^2}$, and $\Imh{uv}$, and does not divide any of the following: $N(u)$, $N(v)$, and 
					$\Reh{u\overline{v}}$.
					
					\item $cmm$ if $\Imh{u\overline{v}}$ divides $\Reh{u^2}$, $\Reh{v^2}$, and $\Reh{uv}$, and does not divide any of the following: $N(u)$, $N(v)$, and 
					$\Reh{u\overline{v}}$.
					
					\item $cmm$ if $\Imh{u\overline{v}}$ divides $\Reh{u^2}-\Imh{u^2}$, $\Reh{v^2}-\Imh{v^2}$, and $\Reh{uv}-\Imh{uv}$, and does not divide any 
					of the following: $N(u)$, $N(v)$, and $\Reh{u\overline{v}}$.
					
					\item $p2$ if none of the above criteria is satisfied.
				\end{enumerate}
			\end{thm}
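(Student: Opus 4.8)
The plan is to adapt the argument behind Theorem~\ref{sqr}, replacing $\Z[i]$ and the point group $p4m$ by $\Z[\xi]$ and $p6m$. First I would record the structural facts. The symmetry group $G$ of $\G_1=\Z[\xi]$ is symmorphic of type $p6m$, with point group $P(G)$ the dihedral group of order $12$ generated by the $60^{\circ}$ rotation $R_{1+\xi}$ about the origin (multiplication by the unit $1+\xi=-\xi^{2}=e^{i\pi/3}$) and the reflection $T_r$. By Theorem~\ref{genresult}, $T(G)\subseteq H$ and $P(H)=\set{g\in P(G):g\G_2=\G_2}$, and by Proposition~\ref{180}, $R_{-1}\in P(H)$; moreover $H$ inherits symmorphicity from $G$ (the linear part of every element of $H$ lies in $H$ because $T(G)\subseteq H$), so $H=P(H)\ltimes T(G)$ and the type of $H$ is governed by $P(H)$. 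The subgroups of $P(G)$ containing $R_{-1}$ are precisely $\gen{R_{-1}}$, the rotation group $C_{6}=\gen{R_{1+\xi}}$, the three order-$4$ subgroups $\gen{R_{-1},\sigma}$ coming from the three pairs of mutually perpendicular mirror lines of the hexagonal lattice, and $P(G)$ itself; since $\Z[\xi]$ is centered rectangular relative to each such mirror pair, these six possibilities give exactly the six types $p2$, $p6$, $cmm$, $cmm$, $cmm$, $p6m$ appearing in the statement. It therefore suffices to test whether each of $R_{1+\xi}$, $T_r$, $R_{\xi}T_r$ (the map $z\mapsto\xi\overline{z}$) and $R_{1+\xi}T_r$ (the map $z\mapsto(1+\xi)\overline{z}$) fixes $\G_2$: the remaining rotations are redundant since $\gen{R_{-1},R_{\xi}}=\gen{R_{1+\xi}}$, and within a perpendicular pair one mirror suffices because $\gen{R_{-1},\sigma}$ already contains the other.

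Second, I would turn each invariance test into an integer divisibility. As any $g\in P(G)$ is an isometry fixing $\G_1$, the lattice $g\G_2=\gen{gu,gv}_{\Z}$ has the same index in $\G_1$ as $\G_2$, so $g\G_2=\G_2$ if and only if $gu\in\G_2$ and $gv\in\G_2$. Solving $w=\alpha u+\beta v$ with the area pairing and using $\Im{z}=\tfrac{\sqrt{3}}{2}\,\Imh{z}$ for $z\in\Z[\xi]$ (which cancels the irrational factor, everything being an integer since $z\overline{w}\in\Z[\xi]$) gives $\alpha=\Imh{w\overline{v}}/\Imh{u\overline{v}}$ and $\beta=\Imh{u\overline{w}}/\Imh{u\overline{v}}$. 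Hence $w\in\G_2$ if and only if $\Imh{u\overline{v}}$ divides both $\Imh{w\overline{v}}$ and $\Imh{u\overline{w}}$.

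Third, I would apply this to the four test isometries. For a rotation $R_{\zeta}$ one runs into $\Imh{\zeta u\overline{v}}$, $\Imh{\overline{\zeta}\,u\overline{v}}$, $\Imh{\zeta}\,N(v)$ and $\Imh{\overline{\zeta}}\,N(u)$; for a reflection $z\mapsto\zeta\overline{z}$ one runs into $\Imh{\overline{\zeta}u^{2}}$, $\Imh{\overline{\zeta}uv}$, $\Imh{\zeta\overline{v^{2}}}$ and $\Imh{\zeta\overline{uv}}$. Evaluating these with $\overline{\xi}=\xi^{2}=-1-\xi$ and a little algebra in $\Z[\xi]$ should yield: $R_{1+\xi}$ fixes $\G_2$ iff $\Imh{u\overline{v}}$ divides $N(u)$, $N(v)$, $\Reh{u\overline{v}}$; $T_r$ fixes $\G_2$ iff $\Imh{u\overline{v}}$ divides $\Imh{u^{2}}$, $\Imh{v^{2}}$, $\Imh{uv}$; $R_{\xi}T_r$ fixes $\G_2$ iff $\Imh{u\overline{v}}$ divides $\Reh{u^{2}}$, $\Reh{v^{2}}$, $\Reh{uv}$; and $R_{1+\xi}T_r$ fixes $\G_2$ iff $\Imh{u\overline{v}}$ divides $\Reh{u^{2}}-\Imh{u^{2}}$, $\Reh{v^{2}}-\Imh{v^{2}}$, $\Reh{uv}-\Imh{uv}$. (In case (a), ``$\Imh{u\overline{v}}$ divides $u^{2}$'' means it divides both $\Reh{u^{2}}$ and $\Imh{u^{2}}$, hence their difference, so case (a) amounts to the conjunction of the first three tests.)

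Finally, I would assemble the cases. The key group-theoretic remark is that any two of the four test isometries, together with $R_{-1}$, generate all of $P(G)$; hence if $P(H)\neq P(G)$ then at most one of the four divisibility conditions can hold, and since each of the five non-trivial admissible subgroups of $P(G)$ contains exactly one of the four test isometries, $P(H)$ is pinned down by which condition (if any) holds. This yields: (a) $P(H)=P(G)$, type $p6m$, when all the tests pass; (b) $P(H)=C_{6}$, type $p6$, when the rotation test passes but the $T_r$ test fails; (c)--(e) $P(H)$ one of the three order-$4$ subgroups, type $cmm$, when the rotation test fails but exactly one reflection test passes (the three subcases according to which mirror pair survives); and (f) $P(H)=\gen{R_{-1}}$, type $p2$, otherwise. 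I expect the main obstacle to be the third step---matching each of the six mirrors of $\Z[\xi]$ (and the rotation) to the correct algebraic condition without sign or coordinate errors, $\Reh$ and $\Imh$ being coordinates in the skew basis $\set{1,\xi}$ rather than real and imaginary parts---together with checking in the last step that the five criteria are mutually exclusive and together exhaust the non-$p2$ situations, so that ``none of the above'' genuinely forces $P(H)=\gen{R_{-1}}$.
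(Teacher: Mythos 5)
Your proposal is correct and follows essentially the same route as the paper, which only sketches Theorem~\ref{hex} as ``analogous'' to the square-lattice case: apply Theorem~\ref{genresult} and Proposition~\ref{180}, test invariance of $\G_2$ under one representative of each class of point symmetries modulo $\gen{R_{-1}}$, and translate each test into a divisibility condition on $\Imh{u\overline{v}}$. Your write-up in fact supplies details the paper omits (the enumeration of subgroups of $P(G)$ containing $R_{-1}$, the explicit coefficient formulas, and the mutual-exclusivity argument), and the divisibility conditions you derive for $R_{1+\xi}$, $T_r$, $R_{\xi}T_r$, and $R_{1+\xi}T_r$ do match cases (a)--(e) of the statement.
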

			
		\subsection{Example of colorings of a rectangular lattice}
			Let $\G_1$ be the rectangular lattice given by $\G_1=\langle 1, 2i \rangle_{\Z}$.  
			For Figure \ref{rect}(a), $\G_2=\langle 3, 2 i\rangle_{\Z}$.  
			By using the above results to check for the invariance of $\G_2$ under $T_r$, the color group of the coloring of $\G_1$ determined by $\G_2$ is of type $pmm$.  
			For Figure \ref{rect}(b),  $\G_3=\langle 4, 1+2i\rangle_{\Z}$.  
			Now, by applying the results above, the color group of the coloring of $\G_1$ determined by $\G_3$ is of type $p2$.
			
			\begin{figure}[ht]
				\centering
				\begin{minipage}{15pc}
					\centering
 					\includegraphics[width=7.85pc]{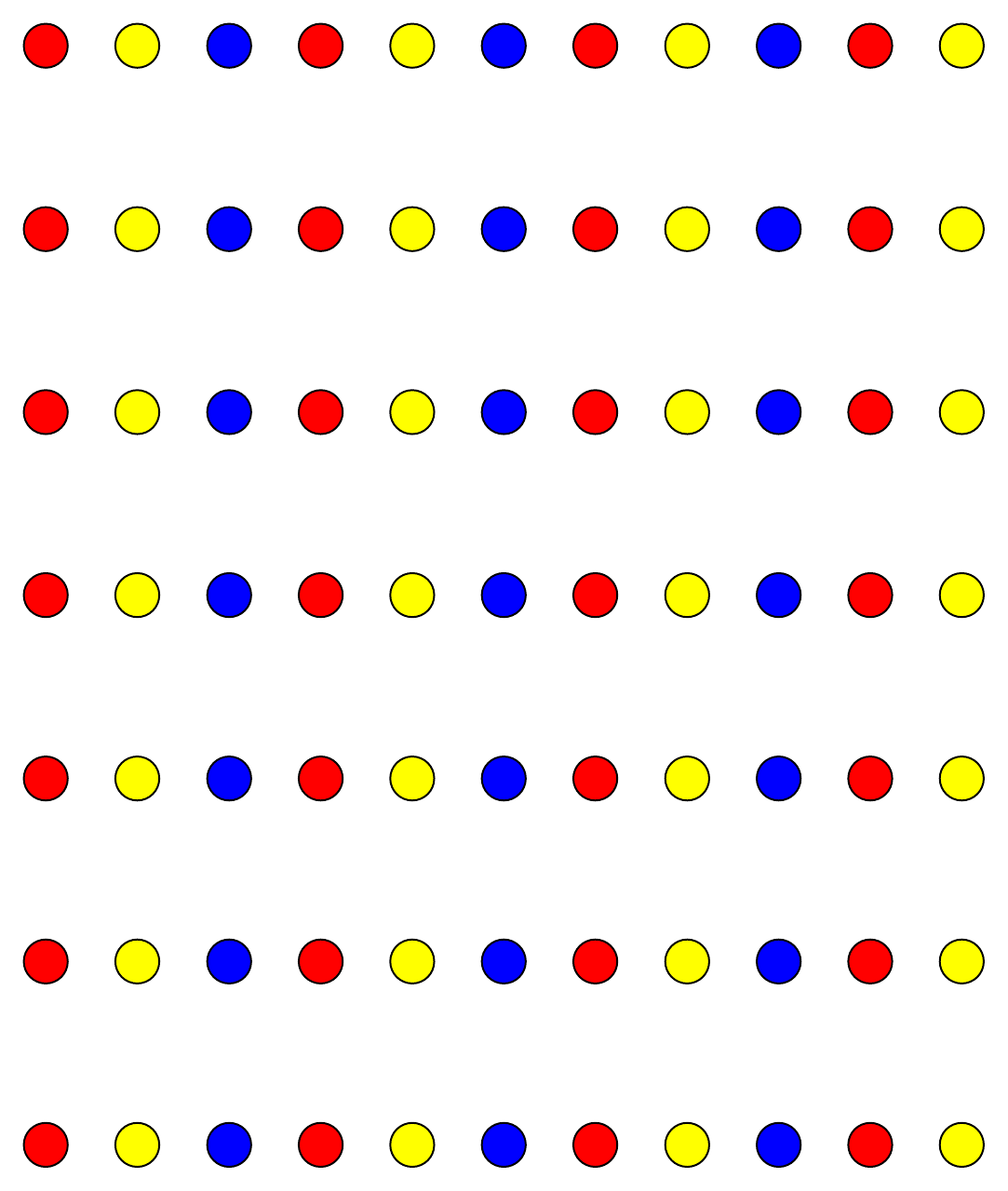}\\
					(a)					
				\end{minipage}\hspace{2pc}
				\begin{minipage}{15pc}
					\centering
					\includegraphics[width=7.85pc]{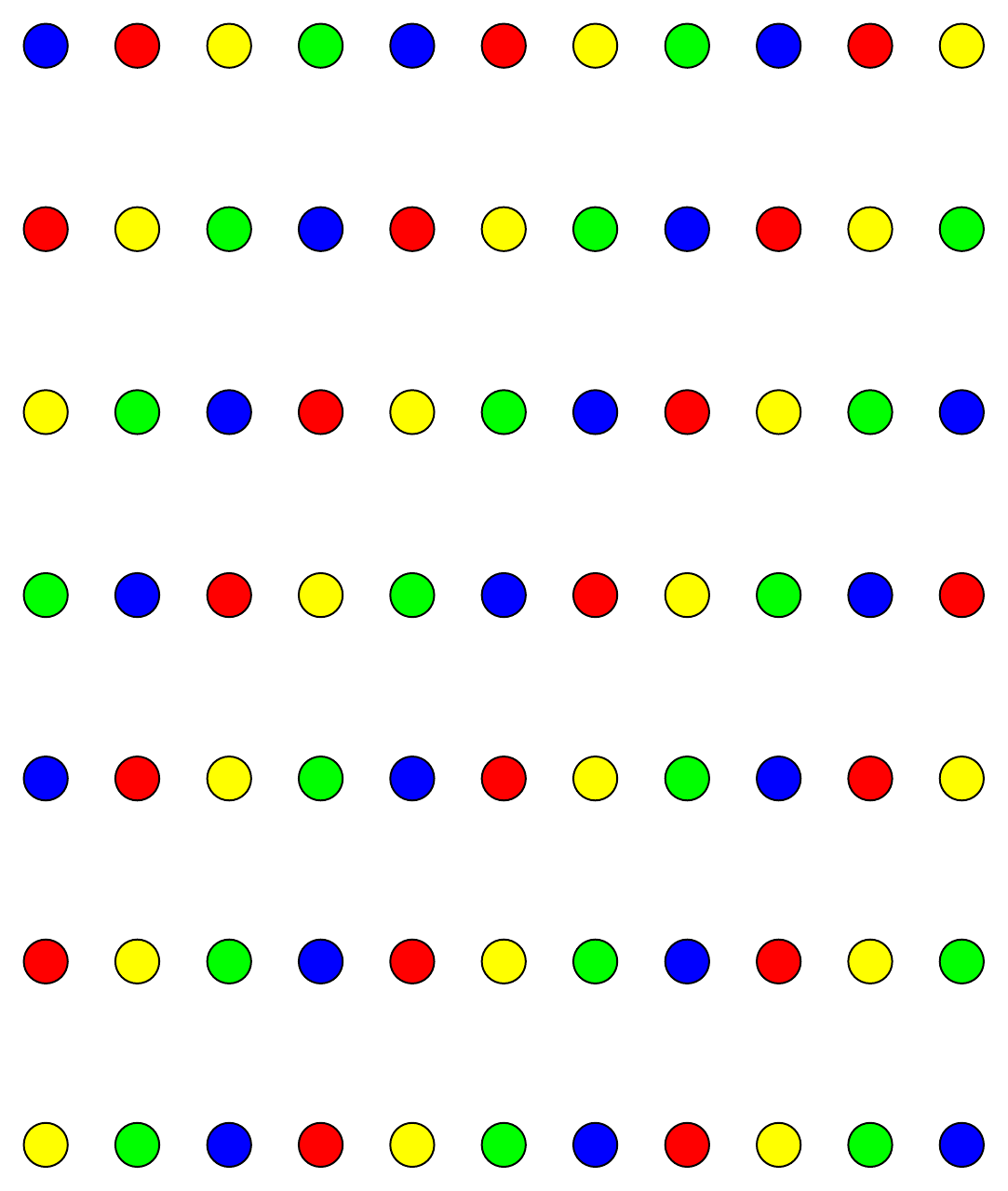}\\
					(b)					
				\end{minipage} 
				\caption{\label{rect}Colorings of $\G_1= \langle 1, 2i \rangle_{\Z}$ determined by (a) $\G_2=\langle 3,2 i \rangle_{\Z}$ and (b) $\G_3=\langle 4, 1+2i \rangle_{\Z}$.}
			\end{figure}
			
			\section{Nonperfect colorings of the Ammann-Beenker tiling}		
			The techniques that we have developed in the previous sections may also be used to obtain submodule colorings of $\Z$-modules.
			We illustrate this by considering the planar $\Z$-module $\M_1=\Z[\xi]$ of rank $4$ with basis $\set{1,\xi,\xi^2,\xi^3}$ 
			where $\xi=e^{\pi i/4}$.  
			The symmetry group of $\Z[\xi]$ is $G=\gen{a,b}\cong D_8$, where $a$ and $b$ correspond to the $45^\circ$-rotation in the counterclockwise direction about the origin and 
			reflection along the real axis, respectively.
			These submodule colorings induce colorings of the vertices of the eight-fold symmetric Ammann-Beenker tiling that are not necessarily perfect.
			
			Given a submodule $\M_2$ of $\M_1$, we identify the color group of  the coloring  determined by $\M_2$ by checking the invariance of $\M_2$ 
			under $R_{\xi}$,  $R_{\xi^2}$, $R_{\xi^3}$, and $T_r$. Note that $\M_2$ is always fixed by the $180^\circ$-rotation about the origin.
			
			As an illustration, to obtain a coloring of the Ammann-Beenker tiling with color group isomorphic to $D_4$, 
			we choose the basis elements of $\M_2$ such that $\M_2$ is invariant under $R_{\xi^2}$ and $T_r$ but not under $R_{\xi}$.  
			For example, if $\M_2 = \langle 1, 2\xi, \xi^2, 2\xi^3 \rangle$ then it can be verified that $\M_2$ satisfies the given conditions. 
			Thus, the color group of the coloring induced by $\M_2$ is $\gen{a^2,b}\cong D_4$ (see Figure \ref{AB}(a)).
			The submodule $\M_3 =\gen{1, 2\xi, 2\xi^2, \xi^3}$ is invariant only under the rotation by $180^\circ$. 
			The color group of the coloring induced by $\M_3$ is $\gen{a^4}\cong\Z_2$ (see Figure~\ref{AB}(b)).		
			\begin{figure}[ht]
				\centering
				\begin{minipage}{17pc}
					\centering
					\includegraphics[width=14pc]{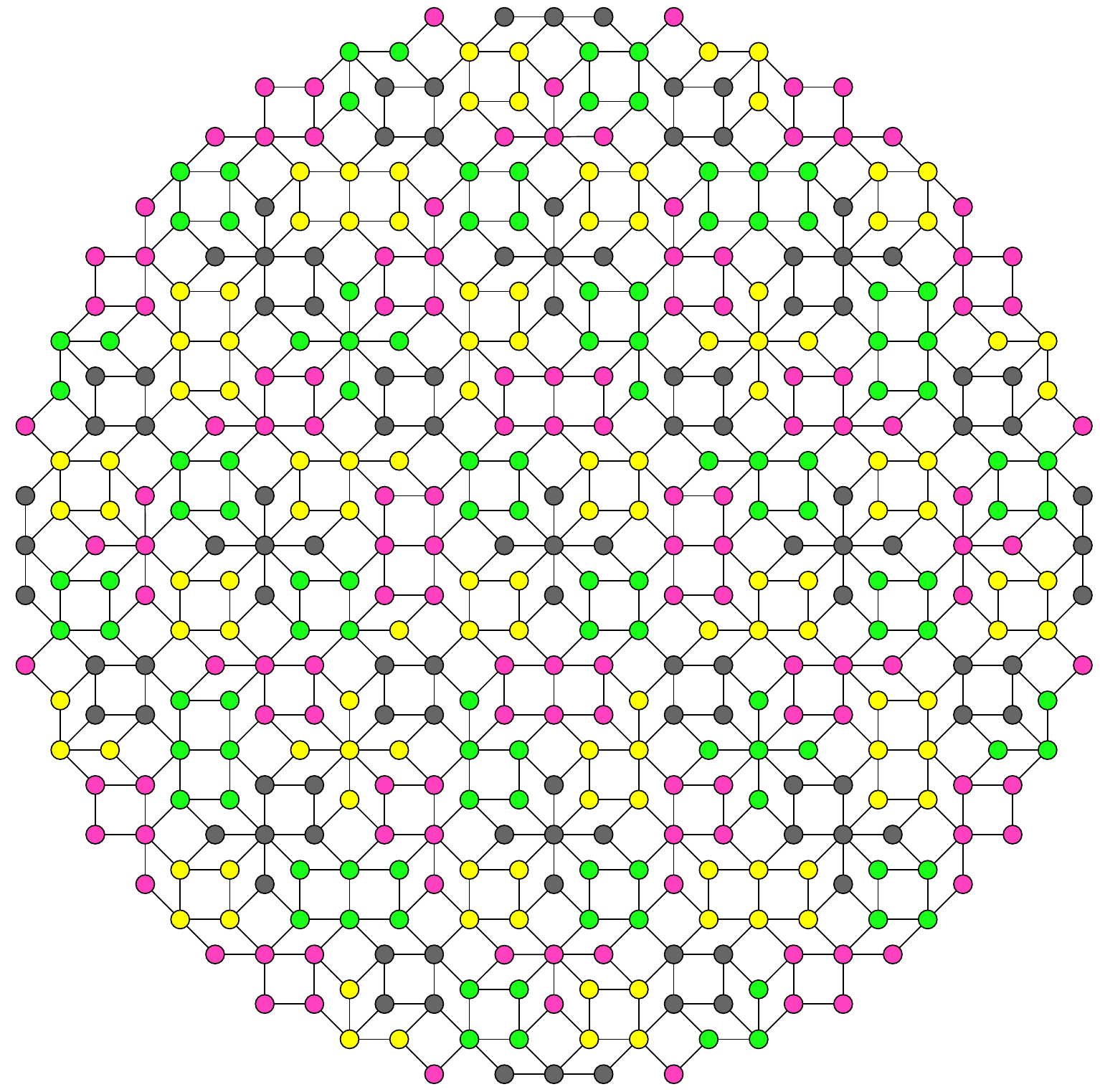}\\
					(a)
				\end{minipage}\hspace{2pc}
				\begin{minipage}{17pc}
					\centering
					\includegraphics[width=14pc]{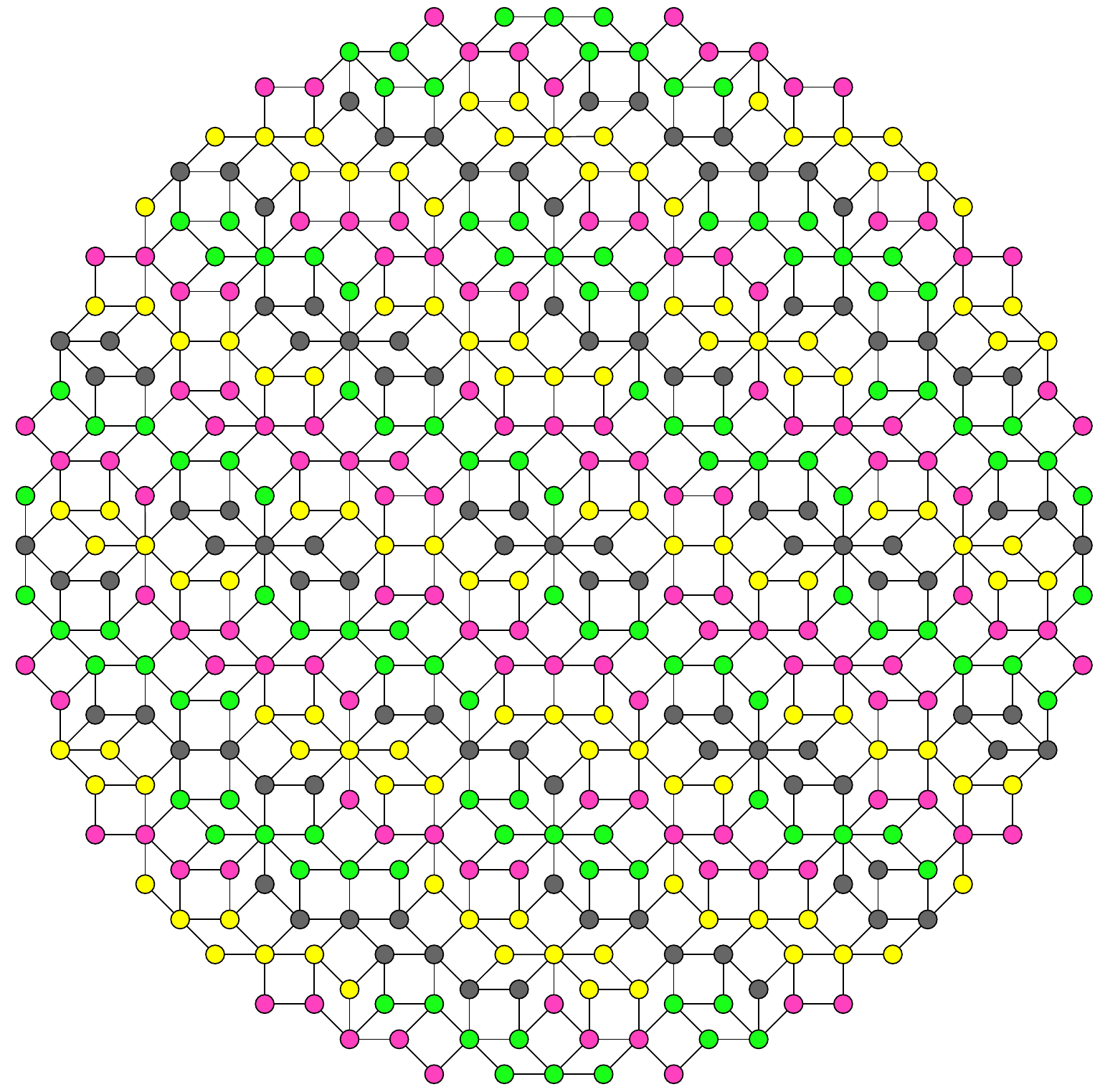}\\
					(b)
				\end{minipage}
				\caption{Colorings of the Ammann-Beenker tiling induced by (a) $\M_2 = \gen{1, 2\xi, \xi^2, 2\xi^3}$ and (b) $\M_3 = \gen{1, 2\xi, 2\xi^2, \xi^3}$}
				\label{AB}
			\end{figure}
							
	\subsection*{Acknowledgement} 
		The authors are grateful for the financial support by the Natural Sciences Research Institute (NSRI) of the University of the Philippines Diliman under
		Project Code: MAT-12-1-01.

\end{document}